\numberwithin{equation}{section}
\newtheorem{question}[equation]{Question}
\newtheorem{thm}[equation]{Theorem}
\newtheorem{cor}[equation]{Corollary}
\newtheorem{prop}[equation]{Proposition}
\theoremstyle{remark}
\newtheorem{remark}[equation]{Remark}
\theoremstyle{definition}
\newcommand{\C}{{\mathbb C}}
\newcommand{\Z}{{\mathbb Z}}
\newcommand{\Q}{{\mathbb Q}}
\newif\ifpdf
\begin{document}

%#######################################################################
%#######################################################################

%% Declarations for Front Matter

\title{Two questions on polynomial decomposition}

\author{Brian K. Wyman}
\address{
  Department of Mathematics,
  University of Michigan,
  Ann Arbor, MI 48109--1043,
  USA
}
\curraddr{
PNYLAB, LLC,
902 Carnegie Center, Suite 200,
Princeton, NJ 08540--6530,
USA
}
%609-750-9020
\email{brian@pnylab.com}
%\email{bkwyman@umich.edu}

\author{Michael E. Zieve}
\address{
  Department of Mathematics,
  University of Michigan,
  Ann Arbor, MI 48109--1043,
  USA
}
\email{zieve@umich.edu}
\urladdr{www.math.lsa.umich.edu/$\sim$zieve/}

\thanks{The authors thank Mel Hochster and Alex Mueller for valuable discussions.
The authors were partially supported by the NSF under grants
DMS-0502170 and DMS-0903420, respectively.}

\begin{abstract}
Given a univariate polynomial $f(x)$ over a ring $R$, we examine when
we can write $f(x)$ as $g(h(x))$ where $g$ and $h$ are polynomials of degree
at least $2$.
We answer two questions of Gusi\'c regarding when the existence of
such $g$ and $h$ over an extension of $R$ implies the existence of
such $g$ and $h$ over $R$.
\end{abstract}

%\date{\today}

\maketitle

%#######################################################################
%#######################################################################
%#######################################################################

\section{Introduction}

Let $R$ be a ring.  If $f(x)\in R[x]$ has degree at least $2$, we say that
$f$ is \textit{decomposable} (over $R$) if we can write $f(x)=g(h(x))$
for some nonlinear $g,h\in R[x]$; otherwise we say $f$ is indecomposable.
Many authors have studied decomposability of polynomials in case
$R$ is a field (see, for instance, \cite{BWZ, BN, BM, DW, Engstrom, Fried-Ritt,
FriedMacRae, Levi, McConnell, Ritt, Schinzel,
Tortrat, Zannier, ZM}).
The papers \cite{DG,DT,Gusic} examine decomposability over more general
rings, in the wake of the following result of Bilu and Tichy \cite{BT}:
for $f,g\in R[x]$, where $R$ is the ring of $S$-integers of a number field,
if the equation $f(u)=g(v)$ has infinitely many solutions $u,v\in R$ then
$f$ and $g$ have decompositions of certain types.
In the present note we answer two questions
on this topic posed recently by Gusi\'c \cite{Gusic}:

\begin{question}\label{q1}
Prove or disprove.  Let $R$ be an integral domain of zero characteristic.
Let $S$ denote the integral closure of $R$ in the field of fractions
of $R$.  Assume that $S\ne R$.  Then there exists a monic polynomial
$f$ over $R$ that is decomposable over $S$ but not over $R$.
\end{question}

\begin{question}\label{q2}
Prove or disprove.  Let $R$ be the ring of integers of a number field $K$.
Assume that $R$ is not a unique factorization domain.  Then there exists
a polynomial $f$ over $R$ that is decomposable over $K$ but not over $R$.
\end{question}

The most significant difference between these questions is that the first
question addresses monic polynomials, while the second addresses
arbitrary polynomials.

We will show that the first question has a negative answer, and the second
has a positive answer.  We also pose two new questions along similar lines.

These questions were motivated by two results due to
Turnwald~\cite[Prop.\ 2.2 and 2.4]{Turnwald}, which assert that if
$R$ is an integral domain of characteristic zero, and $K$ is a field
containing $R$, then:
\begin{enumerate}
\item If $R$ is integrally closed in its field of fractions, then every
indecomposable monic polynomial over $R$ is indecomposable over~$K$.
\item If $R$ is a unique factorization domain,
then every indecomposable polynomial over $R$ is indecomposable over $K$.
\end{enumerate}

The special case $R=\Z$ of Turnwald's first result was first proved by
Wegner \cite[p.~9]{Wegner}, and was later rediscovered in \cite[Thm.~2]{DG}.
Both of Turnwald's results were rediscovered in \cite[Thm.~2.1 and 2.5]{Gusic}.

Further results about polynomial decomposition over rings appear in the
first author's thesis \cite{Wyman} and in forthcoming joint papers by
the authors.

%#######################################################################
%#######################################################################

\section{Monic polynomials}

In this section we show that Question~\ref{q1} has a negative answer.
We prove this by means of the following result.

\begin{prop}
Let $S$ be an integral domain of characteristic zero, and let $R$ be a
subring of $S$.  If monic $g,h\in xS[x]$  satisfy $g(h(x))\in R[x]$,
then $g,h\in (\Q.R)[x]$.
\end{prop}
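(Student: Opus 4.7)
The plan is to determine the coefficients of $g$ and $h$ one at a time by descending induction, exploiting that monicness forces each new unknown coefficient to appear with a nonzero integer coefficient in the associated equation. Write
\[
g(x) = x^m + \sum_{j=1}^{m-1} a_j x^j, \qquad h(x) = x^n + \sum_{i=1}^{n-1} b_i x^i,
\]
so that $g(h(x))$ is monic of degree $mn$, with all coefficients in $R$.

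First I would show that every $b_i$ lies in $\Q.R$. The key point is that for $1\le k\le n-1$ and any $j<m$, we have $\deg(a_j h(x)^j)\le (m-1)n<mn-k$, so the coefficient of $x^{mn-k}$ in $g(h(x))$ equals the coefficient of $x^{mn-k}$ in $h(x)^m$ alone. The multinomial theorem expresses this coefficient as $m\cdot b_{n-k}+P_k(b_{n-1},\ldots,b_{n-k+1})$, where $P_k$ is a polynomial with integer coefficients. An induction on $k$, combined with the fact that $S$ has characteristic zero (so nonzero integers are invertible in $\Q$), yields $b_{n-k}\in\Q.R$.

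Next I would determine the $a_j$ by a similar descending induction on $j$ from $m-1$ down to $1$. Expanding $g(h(x))=\sum_{k=1}^m a_k h(x)^k$ and collecting the coefficient of $x^{jn}$ produces $a_j+\sum_{k=j+1}^m a_k\, C_{j,k}(b_1,\ldots,b_{n-1})$, where each $C_{j,k}$ has integer coefficients (coming from the multinomial expansion of $h(x)^k$). Every term on the right apart from $a_j$ lies in $\Q.R$ by the first stage and the inductive hypothesis (with $a_m=1$), while the whole expression lies in $R$, whence $a_j\in\Q.R$.

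I do not anticipate a substantial obstacle: the argument is essentially careful coefficient bookkeeping. The only delicate ingredient is the verification that, at each stage, the new unknown is the unique ``leading'' term in its equation, appearing with a nonzero integer coefficient (namely $m$ for the $b_i$ and $1$ for the $a_j$); this is exactly what monicness of $g$ and $h$ and the vanishing of their constant terms guarantee.
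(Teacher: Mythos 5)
Your proof is correct and follows essentially the same two-stage descending induction as the paper: first recovering the coefficients of $h$ from the coefficients of $x^{mn-k}$ (where only $h^m$ contributes and the new unknown appears with coefficient $m$), then recovering the coefficients of $g$ from the coefficients of $x^{jn}$. Your index bookkeeping (that $P_k$ depends on $b_{n-1},\dots,b_{n-k+1}$) is in fact slightly more careful than the paper's own phrasing, so nothing further is needed.
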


\begin{proof}
Write $g=\sum_{i=1}^n g_i x^i$ and $h=\sum_{i=1}^m h_j x^j$, with
$g_n=h_m=1$.  Then, for $1\le k<m$, the coefficient of $x^{nm-k}$ in
$g(h(x))$ is $nh_{m-k}$ plus a polynomial (with integer coefficients)
in $h_{m-k-1}, h_{m-k-2},\dots,h_{m-1}$.  Since this coefficient lies in $R$,
it follows by induction on $k$ that each $h_{m-k}$ lies in $\Q.R$.
Likewise, for $1\le k<n$, the coefficient
of $x^{nm-km}$ in $g(h(x))$ equals the sum of $g_{n-k}$ and a polynomial
(with integer coefficients) in $g_{n-k+1},g_{n-k+2},\dots,g_{n-1}, h_1,h_2,\dots,h_{m-1}$.
Since this coefficient lies in $R$, induction on $k$ implies that $g_{n-k}$ lies
in $\Q.R$, as desired.
\end{proof}

\begin{cor}\label{c}
Let $S$ be an integral domain of characteristic zero, and let $R$ be
a subring of $S$ such that $(\Q.R)\cap S=R$.  Then every indecomposable
monic polynomial over $R$ is indecomposable over $S$.
\end{cor}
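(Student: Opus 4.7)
The plan is to prove the contrapositive: assuming $f \in R[x]$ is monic and decomposable over $S$, we exhibit a decomposition of $f$ over $R$.

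First I would reduce an arbitrary decomposition $f = G(H)$ with nonlinear $G,H \in S[x]$ to one of the special shape required by the preceding proposition, namely with $G$ and $H$ both monic and $H$ having zero constant term. Replacing $H$ by $H - H(0)$ and $G$ by $G(x + H(0))$ kills the constant term of $H$ without changing degrees or leading coefficients. Next, since $f$ is monic, the leading coefficients satisfy $G_n \cdot H_m^n = 1$, so $H_m$ is a unit in $S$; setting $c = H_m$ and replacing $H$ by $c^{-1}H$ and $G$ by $G(cx)$ makes both factors monic while preserving $H(0)=0$ and the property that $G \circ H = f$.

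Now I would apply the proposition to this reshaped pair: it forces $G, H \in (\Q.R)[x]$. Combined with the fact that the coefficients already lie in $S$, the hypothesis $(\Q.R) \cap S = R$ immediately yields $G, H \in R[x]$. Since the reduction preserved the degrees of $G$ and $H$, both remain nonlinear, giving a decomposition of $f$ over $R$ and contradicting indecomposability over $R$.

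There is no real obstacle here beyond making sure that the normalization step is legitimate; the only nontrivial point is verifying that $H_m$ is a unit in $S$ (which uses monicity of $f$ together with $S$ being a domain) so that the rescaling stays inside $S[x]$. Once that is in hand, the conclusion is a one-line combination of the proposition and the hypothesis $(\Q.R) \cap S = R$.
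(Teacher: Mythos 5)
Your argument is essentially the paper's proof: normalize the decomposition so that the Proposition applies, then intersect $\Q.R$ with $S$. There is one small mismatch with the Proposition's hypotheses that you should repair. The Proposition requires \emph{both} composition factors to be monic elements of $xS[x]$, i.e.\ both must have zero constant term, whereas your two normalization steps only arrange $H(0)=0$ together with monicity of both factors; after your replacements the outer polynomial has constant term $G(H(0))=f(0)$, which need not vanish, so the pair you produce does not literally satisfy the Proposition's hypothesis. The fix is one line and is exactly what the paper does: subtract $f(0)$ from the outer factor, so that the composite becomes $f-f(0)$, which still lies in $R[x]$ since $f\in R[x]$ and $f(0)\in R$; then, after concluding that both factors have coefficients in $(\Q.R)\cap S=R$, add $f(0)\in R$ back to the outer factor to recover a decomposition of $f$ itself over $R$. (Alternatively one can note that the Proposition's proof never examines the constant coefficient of $g$, and that this constant coefficient equals $f(0)\in R$ anyway, but as written your appeal to the stated Proposition has this small gap.) Everything else, including the verification that the leading coefficient of $H$ is a unit because $1=G_nH_m^{\deg G}$ in the domain $S$, is correct and matches the paper's argument.
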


\begin{proof}
Let $f\in R[x]$ be a monic polynomial which is decomposable over $S$.
Say $f=G(H(x))$ where $G,H\in S[x]$ are nonlinear.  Denoting the leading
coefficients of $G$ and $H$ by $u$ and $v$, we compute the leading coefficient
of $f$ as $1=uv^{\deg(G)}$.  Now let $g=G(vx+H(0))-f(0)$ and $h=uv^{\deg(G)-1}(H(x)-H(0))$,
so $g$ and $h$ are nonlinear monic polynomials in $xS[x]$ such that
$g(h(x))=f(x)-f(0)$ lies in $R[x]$.  By the previous result, $g$ and $h$
have coefficients in $\Q.R$; since they also have coefficients in $S$, in fact
their coefficients lie in $(\Q.R)\cap S=R$, so $f$ is decomposable over $R$.
\end{proof}

We now exhibit an explicit example showing that Question~\ref{q1} has a
negative answer.  In light of the above corollary, it suffices to exhibit
an integral domain $R$ of characteristic zero whose integral closure $S$
satisfies $S\ne R$ and $(\Q.R)\cap S=R$.  One example is $R=\Z[t^2,t^3]$,
where $t$ is transcendental over $\Q$.  The field of fractions of $R$ is
$\Q(t)$, and the integral closure of $R$ in $\Q(t)$ is $S:=\Z[t]$, so
indeed $S\ne R$ and $(\Q.R)\cap S=R$. \qed

\vspace{0.3cm}
In view of Corollary~\ref{c} (and Turnwald's result), we pose the following
modified version of Question~\ref{q1}:

\begin{question} \label{us1}
Let $R$ be an integral domain of characteristic zero, and let $S$ be the
integral closure of $R$ in its field of fractions.  If $(\Q.R)\cap S\ne R$,
then does there exist an indecomposable monic polynomial over $R$ which
decomposes over $S$?
\end{question}

\begin{remark}
If  $R$  is a subring of a number field $K$, then $\Q.R=K$; hence,
for such rings, Question~\ref{us1} reduces to Question~\ref{q1}.
It would be interesting to know whether these questions have an
affirmative answer in this case.
\end{remark}

%#######################################################################
%#######################################################################

\section{Non-monic polynomials}

In this section we show that Question 1.2 has a positive answer.

\begin{thm} \label{main}
If $R$ is the ring of integers of a number field $K$, and $R$ is not a
unique factorization domain, then there exists an indecomposable
polynomial over $R$ which decomposes over $K$.
\end{thm}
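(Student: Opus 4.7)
The plan is to exploit the failure of unique factorization in $R$ to construct a polynomial whose decomposition over $K$ cannot be realized over $R$. Since $R$ is a Dedekind domain that is not a UFD, its ideal class group is nontrivial, so I would pick a non-principal prime ideal $\mathfrak{p}$ of $R$. Let $n \ge 2$ be the order of $[\mathfrak{p}]$ in the class group and fix $\alpha \in R$ with $(\alpha) = \mathfrak{p}^n$. The fractional ideal $\mathfrak{p}^{-1}$ strictly contains $R$ (otherwise $\mathfrak{p} = \mathfrak{p}\mathfrak{p}^{-1} = R$), so I may choose $c \in \mathfrak{p}^{-1} \setminus R$; automatically $v_{\mathfrak{p}}(c) = -1$ and $v_{\mathfrak{q}}(c) \ge 0$ for every prime $\mathfrak{q} \ne \mathfrak{p}$.

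The candidate polynomial is
\[
f(x) \;:=\; \alpha(x^2 + cx)^2 \;=\; \alpha x^4 + 2\alpha c\, x^3 + \alpha c^2\, x^2.
\]
A quick valuation check shows $f \in R[x]$: the $\mathfrak{p}$-adic valuations of the two middle coefficients are at least $n-1$ and $n-2$, both non-negative, and no other prime contributes a pole. Moreover $f = g(h(x))$ with $g(x) = \alpha x^2$ and $h(x) = x^2 + cx$ exhibits $f$ as decomposable over $K$.

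The heart of the argument is to show $f$ is indecomposable over $R$. Any nonlinear decomposition $f = G(H(x))$ over $R$ must have $\deg G = \deg H = 2$, since $\deg f = 4$; write $G(x) = Ax^2 + Bx + C$ and $H(x) = Dx^2 + Ex + F$. Comparing leading coefficients gives $AD^2 = \alpha$, which as an ideal equation reads $(A)(D)^2 = \mathfrak{p}^n$. No prime other than $\mathfrak{p}$ can appear in $(A)$ or $(D)$, so $(A) = \mathfrak{p}^a$ and $(D) = \mathfrak{p}^d$ for non-negative integers satisfying $a + 2d = n$; but since $(A)$ and $(D)$ are principal, we must have $n \mid a$ and $n \mid d$, whose only non-negative solution under $a + 2d = n$ is $(a, d) = (n, 0)$. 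Thus $D$ is a unit and $A$ is a unit multiple of $\alpha$. Comparing the $x^3$-coefficients then yields $2ADE = 2\alpha c$, so $E$ is a unit multiple of $c$; hence $v_{\mathfrak{p}}(E) = -1$ and $E \notin R$, contradicting $H \in R[x]$.

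The step I expect to require the most care is the ideal-theoretic rigidity in the leading coefficient: showing that $(A)(D)^2 = \mathfrak{p}^n$, together with the principality of $(A)$ and $(D)$ and the non-principality of $\mathfrak{p}$, forces $D$ to be a unit. Once that is pinned down, the sub-leading coefficient directly exhibits an element of $K \setminus R$, and no further case analysis on the degrees of $G$ and $H$ is needed because $\deg f = 4$ admits only the $(2,2)$ decomposition type.
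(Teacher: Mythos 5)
Your proof is correct, but it takes a genuinely different route from the paper. The paper deduces the theorem from a more general statement (its Theorem 3.2): for \emph{any} integral domain $R$ possessing an element with two inequivalent factorizations into irreducibles, and in which nonsquares stay nonsquare in $\Frac(R)$, it builds $f=(dx^2+\ell x)\circ(x^2+cx)$ from a pair of non-associate irreducibles $\ell, p_s$ with $\ell\mid ap_s$ but $\ell\nmid a$, $\ell\nmid p_s$, and rules out a decomposition over $R$ by elementary divisibility among irreducibles. You instead exploit the Dedekind structure of the ring of integers head-on: you take a non-principal prime $\mathfrak{p}$ of order $n$ in the class group, a generator $\alpha$ of $\mathfrak{p}^n$, and $c\in\mathfrak{p}^{-1}\setminus R$, and use unique factorization of \emph{ideals} plus the valuation $v_{\mathfrak{p}}$ to show the leading coefficient equation $AD^2=\alpha$ forces $D\in R^*$ and hence $E=Dc\notin R$. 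All your steps check out: $c\in\mathfrak{p}^{-1}\setminus R$ does force $v_{\mathfrak{p}}(c)=-1$ with all other valuations nonnegative; $n\ge 2$ makes $f\in R[x]$; the constraint $a+2d=n$ with $n\mid a$, $n\mid d$ has only $(n,0)$; and degree $4$ leaves only the $(2,2)$ type, so the direct coefficient comparison is fully general (the paper instead normalizes the inner polynomial by a linear $\mu$ to reach the same comparison). What your argument buys is a short, self-contained proof using only standard Dedekind-domain machinery, with the slightly simpler outer polynomial $\alpha x^2$; what the paper's argument buys is generality, since its Theorem 3.2 needs no Noetherian or ideal-theoretic hypotheses and so answers the broader question for a wider class of rings, with the ring-of-integers case falling out as a corollary.
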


In fact we prove the following more general result.

\begin{thm} \label{main2}
Let $R$ be an integral domain which contains an element having two
inequivalent factorizations into irreducibles, and suppose that every
nonsquare in $R$ remains a nonsquare in the fraction field $K$ of $R$.
Then there is an indecomposable degree-$4$ polynomial over $R$ which
decomposes over $K$.
\end{thm}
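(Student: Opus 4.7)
The plan is to build $f$ as $f(x) = h(x)^2/\pi$, where $\pi \in R$ is a carefully chosen irreducible and $h(x) = \pi x^2 + a x$ for a suitable $a \in R$. I would first use the two-factorization hypothesis to extract an irreducible $\pi \in R$ together with an element $a \in R$ satisfying $\pi \mid a^2$ and $\pi \nmid a$. Then $h(x)^2 = \pi^2 x^4 + 2\pi a x^3 + a^2 x^2$, so
\[
  f(x) \,:=\, \frac{h(x)^2}{\pi} \,=\, \pi x^4 + 2 a x^3 + \tfrac{a^2}{\pi}\, x^2
\]
lies in $R[x]$ and has a manifest $(2,2)$-decomposition over $K$ as $f(x) = g(h(x))$ with $g(y) := y^2/\pi \in K[y]$.

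The extraction of $\pi$ and $a$ is the main obstacle. A standard argument produces an irreducible non-prime $\pi$ from the two inequivalent factorizations: if $p_1 \cdots p_m = q_1 \cdots q_n$ is such a pair, some $p_i$ is non-associate to every $q_j$ and therefore cannot be prime. Going from this to an $a \in R$ with $\pi \mid a^2$ and $\pi \nmid a$ amounts to exhibiting a nonzero square-zero element in $R/(\pi)$. For the ring of integers of a number field (which yields Theorem~\ref{main}), this falls out of the ideal factorization: any nontrivial class-group element gives a non-principal prime $\mathfrak p$ of finite order $n \ge 2$, so $\mathfrak p^n = (\pi)$ is an irreducible with non-squarefree ideal, and any $a \in \mathfrak p^{\lceil n/2 \rceil}\setminus\mathfrak p^n$ will do. In full generality, the square hypothesis plays a role in ensuring that this nilpotent structure survives in $R$ rather than being a phantom of $R$ being too small inside $K$.

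For the $R$-indecomposability of $f$ I argue as follows. Suppose $f = G(H)$ with $G,H \in R[x]$ both quadratic. Since the $(2,2)$-decomposition over $K$ is unique up to affine substitution on the inner polynomial (in any characteristic $\ne 2$), one has $H = \lambda h + \mu$ for some $\lambda \in K^\times$ and $\mu \in K$, with $G(y) = (y-\mu)^2/(\pi\lambda^2)$. The condition $H \in R[x]$ forces the leading coefficient $\lambda \pi$ to lie in $R$, while $G \in R[y]$ forces $1/(\pi\lambda^2) \in R$, and hence $\pi\lambda^2 \in R^\times$. Writing $s := \lambda\pi \in R$, we obtain $s^2 = \pi \cdot (\pi\lambda^2) = u\pi$ for some $u \in R^\times$, so $\pi = u^{-1} s^2$ factors in $R$ as $(u^{-1} s) \cdot s$. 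Since $\pi$ is irreducible, one of these factors must be a unit, which forces $s$ itself to be a unit and hence $\pi$ to be a unit --- contradicting the irreducibility of $\pi$.
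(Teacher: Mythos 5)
Your construction is genuinely different from the paper's, but it has two gaps, and the first is serious. It is the extraction step, which you yourself flag as ``the main obstacle'': two inequivalent factorizations readily yield an irreducible non-prime, i.e.\ an irreducible $\ell$ with $\ell\mid ab$, $\ell\nmid a$, $\ell\nmid b$ for two (possibly different) elements $a,b$, but your construction needs the strictly stronger statement that some irreducible $\pi$ satisfies $\pi\mid a^2$ and $\pi\nmid a$ for a \emph{single} element $a$, i.e.\ that $R/(\pi)$ is non-reduced. These are not the same condition: $3$ is an irreducible non-prime in $\Z[\sqrt{-5}]$, yet $\Z[\sqrt{-5}]/(3)\cong\F_3\times\F_3$ is reduced, so no such $a$ exists for $\pi=3$. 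Your class-group argument repairs this for rings of integers by choosing $\pi$ with $(\pi)=\mathfrak{p}^n$ non-squarefree, but Theorem~\ref{main2} concerns arbitrary integral domains satisfying the two hypotheses, and there you offer no argument; the remark that the square hypothesis ``ensures that this nilpotent structure survives'' is not a proof, and it is not clear that the needed pair $(\pi,a)$ exists in that generality. The paper sidesteps the issue entirely by using \emph{two} non-associate irreducibles $\ell$ and $p_s$ with $\ell\mid ap_s$, $\ell\nmid a$, $\ell\nmid p_s$ (which \emph{does} follow directly from the inequivalent factorizations), and composing $p_s^2x^2+\ell x$ with $x^2+(a/\ell)x$; the choice of outer leading coefficient $d=p_s^2$ is exactly what makes every coefficient of the composite, e.g.\ $d(a/\ell)^2=(ap_s/\ell)^2$, land in $R$.

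The second gap is in the indecomposability argument: from $1/(\pi\lambda^2)\in R$ you conclude ``hence $\pi\lambda^2\in R^\times$,'' which is a non sequitur ($\pi\lambda^2$ need not even lie in $R$), and the leading coefficients alone cannot produce a contradiction: taking $\lambda=1/\pi$ gives $H$ with leading coefficient $1$ and $G=\pi(y-\mu)^2$ with leading coefficient $\pi$, both in $R$. What is true is that $s:=\lambda\pi\in R$ and $t:=1/(\pi\lambda^2)\in R$ satisfy $s^2t=\pi$, so irreducibility forces $s\in R^\times$; the contradiction must then come from the coefficient of $x$ in $H$, namely $\lambda a=sa/\pi$, whose membership in $R$ would force $\pi\mid a$. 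That part is fixable. Two further flags: your argument never invokes the hypothesis that nonsquares of $R$ stay nonsquare in $K$, which the paper uses crucially (to pass from $(p_s/u)^2\in R$ to $p_s/u\in R$), so in your setup that hypothesis would have to earn its keep inside the unproved extraction step; and the uniqueness of the $(2,2)$-decomposition you invoke assumes $2\ne 0$, a restriction absent from the statement of the theorem.
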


Recall that two factorizations into irreducibles are \textit{inequivalent}
if there is no bijective correspondence between the irreducibles in the
first and the irreducibles in the second such that corresponding irreducibles
are unit multiples of one another.

\begin{proof}[Proof that Theorem~\ref{main2} implies Theorem~\ref{main}]
Let $R$ be the ring of integers of a number field $K$, and suppose that
$R$ is not a unique factorization domain.  By induction on the norm,
every element of $R$ which is neither zero nor a unit can be written as
the product of irreducible elements.  Thus, since $R$ is not a unique
factorization domain, $R$ must contain an element which has two
inequivalent factorizations into irreducibles.

Let $u$ be an element of $R$ which is a square in $K$.  Then the polynomial
$x^2-u$ has a root in $K$, but this is a monic polynomial over $R$ so its
roots are integral over $R$; hence these roots lie in $R$ since $R$ is
integrally closed in $K$.
\end{proof}

\begin{proof}[Proof of Theorem~\ref{main2}]
Pick an element of $R$ having two inequivalent factorizations into
irreducibles.  By repeatedly removing
irreducibles from the first factorization which have a unit multiple
in the second factorization, we obtain an element $\alpha\in R\setminus
(\{0\}\cup R^*)$ having two factorizations into irreducibles such that
no irreducible in the first factorization has a unit multiple in the
second factorization.  Let $\ell$ be an irreducible in the first
factorization, and write the second factorization as $p_1\dots p_r$
where no $p_i$ is a unit multiple of $\ell$.  Letting $s$ be the least
positive integer for which $\ell\mid p_1\dots p_s$, it follows that
$a:=p_1\dots p_{s-1}$ is an element of $R$ such that $\ell\mid ap_s$
but $\ell$ does not divide either $a$ or $p_s$.

Let  $c = a/\ell$ and  $d = p_s^2$, and put
\begin{equation} \label{eqn}
f(x):=(dx^2+\ell x)\circ (x^2+cx) =
dx^4 + 2dc x^3 + (dc^2+\ell)x^2 + \ell c x,\end{equation}
so $f$ is decomposable over $K$.  Note that $f$ has coefficients in $R$,
since $ap_s/\ell$ lies in $R$.

Pick nonlinear $g,h\in K[x]$ such that $g\circ h=f$.  Let $\mu\in K[x]$
be a linear polynomial such that $\mu\circ h$ is monic and has no constant
term.  Then $f(x)=(g\circ\mu^{-1})\circ (\mu\circ h)$, and since $f(0)=0$
it follows that $g\circ\mu^{-1}$ has no constant term.  By inspecting
(\ref{eqn}), we see that the coefficients of $f$ uniquely determine
the coefficients of $g\circ\mu^{-1}$ and $\mu\circ h$, so
$g\circ\mu^{-1}=dx^2+\ell x$ and $\mu\circ h=x^2+cx$.
Writing $\mu=u^{-1}x+v$, it follows that there exist $u\in K^*$ and $v\in K$ such
that
\[
g = \frac{d}{u^2}x^2 + \frac{2dv+\ell}u x + (dv^2+\ell v) \qquad\text{ and }\qquad
h = ux^2 + ucx - uv.\]
If we can choose such $g$ and $h$ with coefficients in $R$, then
$R$ contains $\{u, uc, uv, d/u^2, (2dv+\ell)/u\}$, so
$R$ contains $\ell/u=(2dv+\ell)/u-2(uv)(d/u^2)$.
But $R$ contains $d/u^2=(p_s/u)^2$, so our hypothesis implies that $R$
contains $p_s/u$.  Thus $u$ divides both $\ell$ and $p_s$ (in $R$);
since $\ell$ and $p_s$ are non-associate
irreducibles, we must have $u\in R^*$.  Finally, since $uc\in R$, it follows
that $R$ contains $c=a/\ell$, contradicting the fact that $\ell\nmid a$.
Therefore $f\in R[x]$ is decomposable over $K$ but not over $R$.
\end{proof}

\begin{remark}
A positive answer to Question~\ref{q2} is provided via a different
argument in \cite[Prop.~2.6]{Turnwald}.
\end{remark}

\vspace{0.3cm}
We do not know how far Theorem~\ref{main2} can be generalized.  We pose the
following modification of Question~\ref{q2}:

\begin{question} \label{us2}
Let $R$ be an integral domain of characteristic zero which is not a unique
factorization domain, and let $K$ be a field containing $R$.  Does there exist
an indecomposable polynomial over $R$ which decomposes over $K$?
\end{question}

%#######################################################################
%#######################################################################

\section{Final note}

There is a mistake in \cite[Remark~1.2]{Gusic}, which attempts
to show that
if $K$ is a field of characteristic zero, and nonconstant $g,h,G,H\in K[x]$
satisfy $g\circ h=G\circ H$ and $\deg h=\deg H$, then there exist $a,b\in K$
such that $H=ah+b$.  The argument in \cite{Gusic} relies on an
incorrect assertion, of which a special case says that the sum of a quadratic
and cubic polynomial over $K$ cannot equal the sum of a linear and cubic
polynomial over $K$.  Since the strategy of the argument is novel,
we give here a corrected version of the proof (and we thank
I.~Gusi\'c for clarifying what was being attempted in \cite{Gusic}).

Write $H=ah+h_0$ with $a\in K$ and $\deg(h_0)<\deg(H)$.  We will show that
$h_0$ is a constant polynomial.  For, if $h_0\ne 0$ then Taylor expansion yields
\[
g\circ h = G \circ (ah+h_0) = \sum_{i=0}^m (G^{(i)}\circ h_0)\frac{(ah)^i}{i!},
\]
where $m:=\deg(G)$.  The left side is a $K$-linear combination of
powers of $h$, and the right side is the sum of polynomials
of degrees $(m-i)\deg(h_0)+i\deg(h)$ for $0\le i\le m$.  Moreover,
the polynomial of degree $m\deg(h)$ in the latter sum is $c h^m$ for some
$c\in K^*$.  After subtracting $c h^m$ from both sides, the right
side has degree $\deg(h_0)+(m-1)\deg(h)$, while the left side has
degree divisible by $\deg(h)$.  Thus $\deg(h)$ divides $\deg(h_0)$,
and since $0\le\deg(h_0)<\deg(H)=\deg(h)$ we conclude that $\deg(h_0)=0$.

We close by remarking that this result was first proved by Ritt \cite{Ritt}
in case $K=\C$, via Riemann surface techniques, and was later proved by Levi
\cite{Levi} by explicitly computing the coefficients of $g\circ h$
(see also \cite[Lemma~2.3]{GTZ}).  The result can also be proved by means of
formal Laurent series \cite{McConnell} or inertia groups \cite[Cor.~2.9]{ZM}.

%#######################################################################
%#######################################################################
%#######################################################################

%\bibliographystyle{plain}

\end{document}